\theoremstyle{plain}
\newtheorem{theorem}{Theorem}[section]
\newtheorem{lemma}{Lemma}[section]
\theoremstyle{definition}
\newcommand{\mf}[1]{\displaystyle{\mathfrak{#1}}}
\newcommand{\comment}[1]{}
\DeclareMathOperator{\spec}{\ensuremath{Spec}}
\DeclareMathOperator{\Gr}{\ensuremath{gr}}
\begin{document}
\title{Generic simplicity of quantum Hamiltonian reductions}
\author{Akaki Tikaradze}
\email{Akaki.Tikaradze@utoledo.edu}
\address{University of Toledo, Department of Mathematics \& Statistics, 
Toledo, OH 43606, USA}
\begin{abstract}

Let a reductive group $G$ act on a smooth affine complex algebraic variety $X.$
Let $\mathfrak{g}$ be the Lie algebra of $G$ and $\mu:T^*(X)\to \mathfrak{g}^*$ be the moment map.
If the moment map is flat, and for a generic character $\chi:\mathfrak{g}\to\mathbb{C}$, the action of $G$ on
$\mu^{-1}(\chi)$ is free, then we show that for very generic characters $\chi$ the corresponding quantum Hamiltonian reduction
of the ring of differential operators $D(X)$ is simple. 
\end{abstract}

\maketitle

Let a reductive algebraic group $G$ act on a smooth affine algebraic variety $X$ over $\mathbb{C}.$
Let $\mathfrak{g}$ be the Lie algebra of $G.$
Let $\mu:T^*(X)\to \mathfrak{g}^*$ be the corresponding moment map.
We will assume that this map is flat, and for generic $G$-invariant character $\chi\in \mathfrak{g}^*$ the action of $G$
on $\mu^{-1}(\chi)$ is free. 
   
       Given a $G$-invariant character $\chi\in \mathfrak{g}^*$, denote by $U_{\chi}(G, X)$ the quantum Hamiltonian reduction of
       $D(X)$ with respect to $\chi$. So, 
       $$U_{\chi}(G, X)=(D(X)/D(X)\mathfrak{g}^{\chi})^G,$$ where $\mathfrak{g}^{\chi}=\lbrace g-\chi(g)\in D(X), g\in\mathfrak{g}\rbrace.$
The usual filtration on $D(X)$ by the order of differential operators induces the corresponding filtration on $U_{\chi}(G, X).$
Then it follows from the flatness of the moment map that 
$$\Gr  U_{\chi}(G, X)=\mathcal{O}(\mu^{-1}(0)//G).$$
In what follows by a very generic subset we mean a complement of a union of countably
many proper closed Zariski subsets.
Under these assumptions we have the following result.

\begin{theorem}\label{main}
For very generic values of a $G$-invariant character $\chi\in \mathfrak{g}^*,$ the corresponding
quantum Hamiltonian reduction $U_{\chi}(G, X)$ is simple.
Moreover, if $f\in \mathfrak{g}_{\mathbb{Z}}/[\mathfrak{g}_{\mathbb{Z}}, \mathfrak{g}_{\mathbb{Z}}]$ is so that $G$ acts freely on $\mu^{-1}(\chi)$ whenever
$\chi(f)\neq 0$, then $U_{\chi}(G, X)$ is simple for all $\chi$ such that $\chi(f)\notin \mathbb{Q}.$
\end{theorem}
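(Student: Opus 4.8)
\emph{Strategy and spreading out.} The plan is to prove both assertions by reduction modulo $p$, using that $D(X)$ becomes an Azumaya algebra in positive characteristic and that the reduced space is symplectic; the two statements will then be extracted from a single characteristic-$p$ simplicity result via, respectively, a Chebotarev-type argument and a genericity/constructibility argument. Choose a finitely generated $\mathbb{Z}$-subalgebra $R\subset\mathbb{C}$ over which $X$, $G$, the action, the flat moment map $\mu$, the filtered algebra $U_\chi(G,X)$ (call its $R$-form $U_R$) and the isomorphism $\Gr U_\chi(G,X)\cong\mathcal{O}(\mu^{-1}(0)//G)$ are all defined, enlarged so that $G$ is split and the relevant coordinates of $\chi$ lie in $R$. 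Fix an integral basis $h_1,\dots,h_r$ of the centre of $\mathfrak{g}_R$ with $h_i^{[p]}=h_i$ modulo every prime, so that $\bar f^{[p]}=\bar f$ for any representative $f$ of a class in $\mathfrak{g}_{\mathbb{Z}}/[\mathfrak{g}_{\mathbb{Z}},\mathfrak{g}_{\mathbb{Z}}]$. For a closed point $\mathfrak{m}\subset R$ with residue field $k=\mathbb{F}_q$ of characteristic $p$, let $\bar\chi$ be the reduction of $\chi$. Over $k$ the algebra $D(X_k)$ is Azumaya of rank $p^{2\dim X}$ over $\mathcal{O}(T^*X^{(1)})$, and a short computation with the $p$-centre — using that $(x_X-\bar\chi(x))^p=x_X^p-\bar\chi(x)^p$ lies in $D(X_k)\mathfrak{g}^{\bar\chi}$ and that $(x^{[p]})_X\equiv\bar\chi(x^{[p]})$ there — shows that $D(X_k)/D(X_k)\mathfrak{g}^{\bar\chi}$ is supported, over the $p$-centre, on $(\mu^{(1)})^{-1}(\eta)$, where $\eta\in\mathfrak{g}^*$ is the character determined by $\eta(x)^p=\bar\chi(x)^p-\bar\chi(x^{[p]})$. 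In particular $\eta(f)^p=\bar\chi(f)^p-\bar\chi(f)$, so $\eta(f)\neq 0$ precisely when $\bar\chi(f)\notin\mathbb{F}_p$.

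\emph{The characteristic-$p$ core.} I claim $U_{\bar\chi}(G,X_k)$ is simple whenever the associated $\eta$ lies in the open locus of characters whose $\mu$-fibre carries a free $G$-action and over which the reduced space is connected. Indeed, freeness makes $\eta$ a regular value of $\mu$, so by flatness $S:=\mu^{-1}(\eta)//G$ is a smooth affine symplectic variety, connected by assumption; the Azumaya structure of $D(X_k)$ descends along the $G$-torsor $\mu^{-1}(\eta)\to S$, so $U_{\bar\chi}(G,X_k)$ is Azumaya over $Z=\mathcal{O}(S)$, in particular prime, and every nonzero two-sided ideal meets $Z$ nontrivially. Since $U_{\bar\chi}$ arises by reduction from a Frobenius-constant quantization, $Z$ carries the symplectic Poisson bracket and the contraction $I\cap Z$ of any two-sided ideal $I$ is a Poisson ideal; a smooth connected symplectic variety has no proper nonzero Poisson ideal (a Poisson subvariety of a symplectic variety is a union of connected components), so $U_{\bar\chi}(G,X_k)$ has no proper nonzero two-sided ideal.

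\emph{Descent to characteristic zero and conclusion.} Suppose $U_\chi(G,X)$ has a proper nonzero two-sided ideal $I$. As $U_\chi/I$ is a nonzero $\mathbb{C}$-algebra it contains $\mathbb{C}$, so $R\hookrightarrow U_R/I_R$ with $I_R=I\cap U_R$; hence $U_R/I_R$ is $\mathbb{Z}$-torsion-free and $\operatorname{Spec}(U_R/I_R)\to\operatorname{Spec}R$ is dominant, and standard spreading-out then forces the reduction of $I_R$ to be a proper nonzero two-sided ideal of $U_{\bar\chi}(G,X_k)$ for all $\mathfrak{m}$ in a dense open subset of $\operatorname{Spec}R$. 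For the second assertion, when $\chi(f)\notin\mathbb{Q}$ the element $\bar\chi(f)\in k$ fails to lie in $\mathbb{F}_p$ for infinitely many $\mathfrak{m}$: by density of closed points if $\chi(f)$ is transcendental, and by Chebotarev — the minimal polynomial of $\chi(f)$ does not split over $\mathbb{F}_p$ for a positive density of $p$ — if $\chi(f)$ is algebraic; choosing such an $\mathfrak{m}$ inside the dense open set above gives $\eta(f)\neq 0$, hence a free $G$-action on $\mu^{-1}(\eta)$ and (for generic $\eta$, so for such $\mathfrak{m}$) a connected reduced space, contradicting the core statement. For the first assertion, the assignment $\bar\chi\mapsto\eta$ is, in the coordinates $h_i$, given by the additive polynomials $t\mapsto t^p-t$, hence finite and surjective, so the preimage of the proper closed ``bad'' locus $B\subset(\mathfrak{g}^*)^G$ (non-free fibre, or disconnected reduced space) is proper closed; taking the coordinates of $\chi$ algebraically independent over $\mathbb{Q}$ and using density of closed points of $\operatorname{Spec}R$, one finds for infinitely many $p$ an $\mathfrak{m}$ in the dense open set with $\eta\notin B$, again a contradiction, so $U_\chi(G,X)$ is simple for the generic $\chi$. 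Refining, for each Hilbert polynomial $h$ the set of $\chi$ admitting a proper two-sided ideal whose symbol ideal has Hilbert polynomial $h$ is constructible (parametrise symbol ideals by a relative Quot scheme), so its closure is a proper closed subset; the countable union of these closed subsets is exactly the complement of a very generic set on which $U_\chi(G,X)$ is simple.

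\emph{Main obstacle.} The crux is the positive-characteristic structure theory behind the core statement — that $U_{\bar\chi}(G,X_k)$ is Azumaya over $\mathcal{O}(\mu^{-1}(\eta)//G)$ and that two-sided ideals contract to Poisson ideals of its centre — together with the geometric input that $\mu^{-1}(\eta)//G$ is connected for generic $\eta$, where one uses that $X$ is connected; the spreading-out, the elementary $p$-th-power identities, and the density statements are routine.
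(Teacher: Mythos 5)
Your proposal follows the right general philosophy (spreading out, reduction mod $p$, Azumaya plus symplectic centre), but the ``characteristic-$p$ core'' is false as stated, and a second nontrivial step used by the paper is skipped.

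\textbf{The core claim is wrong.} You assert that $U_{\bar\chi}(G,X_k)$ is \emph{simple} over $k$, arguing that it is Azumaya over $Z=\mathcal{O}(\mu^{-1}(\eta)//G)$ and that ``the contraction $I\cap Z$ of any two-sided ideal $I$ is a Poisson ideal.'' But for an Azumaya algebra $A$ over an affine domain $Z$ of positive dimension, two-sided ideals of $A$ are in bijection with ideals of $Z$ via $I\mapsto I\cap Z$; so $U_{\bar\chi}(G,X_k)$ has the same lattice of two-sided ideals as $Z$ and is emphatically not simple. The Poisson-contraction claim is exactly where this breaks: there is no argument making $I\cap Z$ a Poisson ideal for the reduction-mod-$p$ bracket when $I$ is an arbitrary two-sided ideal of the characteristic-$p$ algebra. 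That implication requires a lift to $W(k)$ (or at least to $W_2(k)$) with $A/I$ $p$-torsion-free, so that $\frac1p[x',y']$ can be pushed back into $I$. In other words the honest statement is about the $p$-adically completed $W(k)$-algebra $\widehat R_p$ being topologically simple after inverting $p$ --- this is precisely Lemma~\ref{simple} quoted from \cite{T} and repackaged as Lemma~\ref{criterion} in the paper. You do have $\mathbb{Z}$-torsion-freeness of $U_R/I_R$ available, so the argument can be repaired, but you must replace the false ``simple over $k$'' lemma with the correct $p$-adic statement and run the contradiction there; as written the proposal proves a false intermediate result and then applies it.

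\textbf{A missing base-change step.} You implicitly use that $U_\chi(G,X)\otimes_R k\cong U_{\bar\chi}(G_k,X_k)$, i.e.\ that forming $G$-invariants of $D(X)/D(X)\mathfrak g^\chi$ commutes with reduction mod $p$. This is not automatic: reductive group cohomology in positive characteristic obstructs surjectivity of $R^G\otimes k\to R_k^{G_k}$ in general. The paper handles this via Theorem~\ref{change} (van der Kallen), which relies on vanishing of $H^i(G,S[\tfrac1n])$ for large $n$ \cite{FW}. Your spreading-out paragraph asserts the identification of $\Gr$'s without addressing this. Finally, your closing paragraph on Hilbert polynomials and relative Quot schemes is an alternative route to ``very generic,'' but it is only sketched and is independent of the main gap; the paper instead imposes the very generic condition directly (algebraic independence of the $\chi(e_i)$ over $S$), which makes the non-vanishing of $\bar f(\chi^{[1]})$ for $p\gg0$ immediate and avoids any moduli-of-ideals argument.
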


The proof is be based on the reduction modulo $p^n$ technique for a large prime $p.$

At first, we recall that given a ring $R$ such that $p$ is not a zero divisor, then the center
of its reduction modulo $p, R_p=R/pR$ acquires a natural Poisson bracket, to be referred to as the reduction
modulo $p$ Poisson bracket, defined  as follows. Given central elements $x, y\in Z(R_p),$ let $x', y'\in R$ be their lifts.
Then $$\lbrace x, y\rbrace =(\frac{1}{p}[x', y'])\mod p\in Z(R_p).$$

We  use the following result [\cite{T}, Corollary 8].

\begin{lemma}\label{simple}
Let $\bf{k}$ be a perfect field of characteristic $p.$
Let $A$ be a  $p$-adically complete topologically free $W(\bold{k})$-algebra,
such that $A_1=A/pA$ is an Azumaya algebra over its center $Z_1.$ Assume that $\spec(Z_1)$
is a smooth symplectic $\bold{k}$-variety under the reduction modulo $p$  Poisson bracket.
Then $A[p^{-1}]$ is topologically simple.
\end{lemma}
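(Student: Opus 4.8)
The plan is to reduce to a statement about two-sided ideals of $A$ itself. Call a two-sided ideal $J\subseteq A$ \emph{$p$-saturated} if $pa\in J$ with $a\in A$ forces $a\in J$. If $I\subseteq A[p^{-1}]$ is a nonzero closed two-sided ideal then $J:=I\cap A$ is a nonzero $p$-saturated, $p$-adically closed two-sided ideal of $A$ with $J[p^{-1}]=I$, so it suffices to prove that every such $J$ equals $A$. First I would dispose of the trivial alternative: if $J+pA=A$, then iterating gives $A=J+p^nA$ for all $n$, so $1$ is a $p$-adic limit of elements of $J$, and since $J$ is closed, $J=A$. So assume $J+pA\neq A$. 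Since $J\neq 0$ and $J$ is $p$-saturated while $A$ is $p$-adically separated, $J\not\subseteq pA$; hence the image $J_1$ of $J$ in $A_1=A/pA$ is a nonzero proper two-sided ideal of $A_1$.

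Now I would invoke the Azumaya hypothesis. For an Azumaya algebra the two-sided ideals correspond bijectively to the ideals of the center via $\mathfrak a\mapsto A_1\mathfrak a$, with inverse $J_1\mapsto J_1\cap Z_1$; so $\mathfrak a:=J_1\cap Z_1$ is a nonzero proper ideal of $Z_1$. The crucial point is that $\mathfrak a$ is a \emph{Poisson} ideal for the reduction modulo $p$ bracket. To see this, take $z\in\mathfrak a$ and lift it to $\tilde z\in J$, and take $w\in Z_1$ with any lift $\tilde w\in A$. Because $\tilde z\bmod p$ is central, $[\tilde z,\tilde w]\in pA$; because $\tilde z\in J$, also $[\tilde z,\tilde w]\in J$; since $J$ is $p$-saturated and $A$ is $p$-torsion free, $\tfrac1p[\tilde z,\tilde w]\in J$, and by definition this element reduces to $\{z,w\}\in Z_1$. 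Hence $\{z,w\}\in J_1\cap Z_1=\mathfrak a$, so $\mathfrak a$ is Poisson. Thus everything comes down to showing: a smooth connected symplectic variety cannot carry a nonzero proper Poisson ideal which is the reduction of a two-sided ideal of a flat $p$-adic lift.

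This last step is where essentially all the difficulty lies, and it is genuinely subtler in characteristic $p$ than in characteristic $0$. In characteristic $0$ one is already finished: all Hamiltonian vector fields are tangent to $V(\mathfrak a)$, non-degeneracy of the symplectic form forces them to span every tangent space of $\spec Z_1$, so $V(\mathfrak a)$ is open, hence equals $\spec Z_1$, hence $\mathfrak a=0$ since $Z_1$ is reduced. In characteristic $p$ this breaks — for instance $(\xi_1^p)$ is a Poisson ideal of $\mathbf k[\xi_1,\xi_2]$ with its standard bracket — and one must use that $\mathfrak a$ comes from a two-sided ideal of the flat lift $A$, not merely of $A_1$. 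The plan is: after a harmless base change assume $\mathbf k$ algebraically closed, pick a closed point $x\in V(\mathfrak a)$, and complete $A$ along $x$; the completed Azumaya algebra splits over its complete local center, so after passing to a Morita-equivalent algebra one is reduced to the case $A_1=Z_1=\mathbf k[[\xi_1,\dots,\xi_{2d}]]$, a formal symplectic polydisc, with $A$ a flat $p$-adically complete $W(\mathbf k)$-algebra quantizing it, the reduction modulo $p$ bracket being the (completed, hence non-degenerate) symplectic bracket. One then shows that such an algebra has no nonzero proper $p$-saturated two-sided ideal, by an argument modelled on the proof that the $p$-adic Weyl algebra over $W(\mathbf k)$ is topologically simple: repeatedly take commutators of elements of the ideal with well-chosen generators and divide out the resulting powers of $p$ — legitimate precisely because the ideal is $p$-saturated — using non-degeneracy of the bracket to guarantee that each such step strictly lowers a suitable order-of-vanishing invariant at $x$, until a unit lands in the ideal, a contradiction. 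Controlling that descent in characteristic $p$ — the interplay of the $\mathfrak m_x$-adic and $p$-adic filtrations with the non-degenerate reduction modulo $p$ bracket, perhaps after a characteristic-$p$ Darboux-type normalization of the local model — is the main obstacle; everything preceding it is formal.
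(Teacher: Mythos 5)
The paper does not prove this lemma at all --- it is imported verbatim from [T, Corollary 8] --- so the comparison here is really with that reference rather than with anything in the present text. Your formal reductions are correct and are the standard ones: passing from closed two-sided ideals of $A[p^{-1}]$ to nonzero proper $p$-saturated ideals $J\subseteq A$, using completeness to rule out $J+pA=A$, using the Azumaya property to replace $J_1$ by the ideal $\mathfrak{a}=J_1\cap Z_1$ of the center, and the verification that $\mathfrak{a}$ is a Poisson ideal for the reduction modulo $p$ bracket (including the implicit well-definedness check) is clean and right.

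The problem is that everything you have actually proved stops exactly where the lemma begins to have content. As you yourself observe, ``$\mathfrak{a}$ is a nonzero proper Poisson ideal of a smooth symplectic variety'' is \emph{not} a contradiction in characteristic $p$ (your example $(\xi_1^p)$), so the symplectic hypothesis has not yet been used in any essential way, and the remaining claim --- that a flat $p$-adically complete lift of a formal symplectic disc admits no nonzero proper $p$-saturated two-sided ideal --- is precisely the theorem of [T], not a routine Weyl-algebra computation. Your sketched descent (``take commutators, divide by $p$, lower an order of vanishing'') cannot work if it only ever uses the first-order bracket $\tfrac1p[\cdot,\cdot]\bmod p$, because that operation preserves the counterexample ideal $(\xi_1^p)$; to kill such ideals one must divide iterated commutators by \emph{higher} powers of $p$ (e.g.\ with $[\eta,\xi]=p$ one has $[\eta,\xi^p]=p^2\xi^{p-1}$, and only $\tfrac1{p^2}$ of this lands you at $\xi^{p-1}$), which forces one to control the centers and induced brackets of all the quotients $A/p^nA$ simultaneously --- this is the actual mechanism in [T], and it requires structural input (the relation of $Z(A/p^nA)$ to $p^{n-1}$-st powers of lifts of $Z_1$) that your outline does not supply. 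There is also an unaddressed step in the localization: completing and splitting the Azumaya algebra is done at the level of $A_1$, and one must lift the idempotents and the Morita equivalence to the $p$-adic lift $A$ (possible since $A$ is $p$-adically complete, but it needs to be said) and check that $J$ stays nonzero and proper after completion at a chosen point of $V(\mathfrak{a})$. So the proposal is a correct and well-organized reduction, but the decisive step is a restatement of the result being proved rather than a proof of it.
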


Next  we need to recall some results and notations associated with quantum Hamiltonian reduction of the ring of crystalline
differential operators in characteristic $p$ from \cite{BFG}. 

Let $X$ be a smooth affine variety over an algebraically closed field $\bold{k}$ of characteristic $p$, 
and $G$ be a reductive algebraic group over $\bold{k}$ with the Lie algebra $\mf{g}.$
Denote by $D(X)$  the ring of crystalline differential operators on $X.$
As before, we have the moment map $\mu:T^*(X)\to\mathfrak{g}^*$ and the algebra homomorphism  $U(\mf{g})\to D(X).$
Now recall that the $p$-center of $U(\mf{g})$, denoted by $Z_p(\mathfrak{g}),$ is generated by $g^p-g^{[p]}, g\in \mf{g}.$
We get an isomorphism
$$i:\text{Sym}(\mf{g})^{(1)}\to Z_p(\mathfrak{g}).$$
 On the other hand,  the center of $D(X)$ 
is generated by $\mathcal{O}(X)^p$ and $\xi^p-\xi^{[p]}, \xi\in T_X$ and this leads to an isomorphism
$$\mathcal{O}(T^*(X))^{(1)}\to Z(D(X)).$$
 We have $\eta':Z_p(\mf{g})\to Z(D(X))$ and the corresponding homomorphism
$$\eta:\text{Sym}(\mf{g})^{(1)}\to \mathcal{O}(T^*(X))^{(1)}.$$
Given $\chi\in \mathfrak{g}^*$, then $\chi^{[1]}\in \mathfrak{g}^*$ is defined as follows: 
$$\chi^{[1]}(g)=\chi(g)^p-\chi(g^{[p]}),\quad g\in \mathfrak{g}.$$
Using the above homomorphisms it follows that the center of $U_{\chi}(G, X)$ contains $\mathcal{O}(\mu^{-1}(\chi^{[1]})//G).$
In this setting the following holds.
\begin{lemma}\label{Azumaya}\cite{BFG}
Let $\chi\in (\mathfrak{g}^*)^G$ be a character.  
Then $U_{\chi}(G, X)$ is a finite algebra over  $\mu^{-1}(\chi^{[1]})//G.$ If $G$ acts freely of $\mu^{-1}(\chi^{[1]}),$
then $U_{\chi}(G, X)$ is an Azumaya algebra over $\mu^{-1}(\chi^{[1]})//G.$

\end{lemma}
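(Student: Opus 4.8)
The finiteness statement will follow quickly from the Azumaya property of $D(X)$ over its center, once we pin down the central character through which $D(X)/D(X)\mf{g}^{\chi}$ factors; the Azumaya assertion will then require a descent argument over the free locus. Throughout I will use the standard companion fact that $D(X)$ is an Azumaya algebra of rank $p^{2\dim X}$ over $Z(D(X))=\bggo(T^*(X))^{(1)}$; in particular $D(X)$, and hence --- $G$ being reductive --- the invariant subalgebra $D(X)^{G}$ and its quotient $U_{\chi}(G,X)$, is a finite module over $(\bggo(T^*(X))^{(1)})^{G}=\bggo(T^*(X)//G)^{(1)}$.

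First I would identify the central character. Write $\bar D=D(X)/D(X)\mf{g}^{\chi}$, which is generated as a left $D(X)$-module by the image $\bar 1$ of $1$, and let $\xi_g\in D(X)$ denote the image of $g\in\mf{g}$. Since $\xi_g-\chi(g)\in\mf{g}^{\chi}$ and $\chi(g)\in\bold{k}$ is central with $\mathrm{char}\,\bold{k}=p$, the binomial identity gives $\xi_g^{p}=(\xi_g-\chi(g))^{p}+\chi(g)^{p}$, where $(\xi_g-\chi(g))^{p}=(\xi_g-\chi(g))^{p-1}(\xi_g-\chi(g))\in D(X)\mf{g}^{\chi}$; thus $\xi_g^{p}\equiv\chi(g)^{p}$ and likewise $\xi_g^{[p]}=\xi_{g^{[p]}}\equiv\chi(g^{[p]})$ modulo $D(X)\mf{g}^{\chi}$. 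Hence the image of the central element $g^{p}-g^{[p]}$ acts on $\bar 1$ by $\chi(g)^{p}-\chi(g^{[p]})=\chi^{[1]}(g)$; being central it acts on $\bar D$ as a $D(X)$-module endomorphism, so it acts by the scalar $\chi^{[1]}(g)$ throughout. Via $\eta$ this shows $\bar D$ is supported over $\mu^{-1}(\chi^{[1]})$, so $U_{\chi}(G,X)=\bar D^{G}$ is supported over $\mu^{-1}(\chi^{[1]})//G$; combined with the finiteness above, $U_{\chi}(G,X)$ is a finite $\bggo(\mu^{-1}(\chi^{[1]})//G)$-module, which is the first claim.

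Next, assume $G$ acts freely on the affine variety $Y:=\mu^{-1}(\chi^{[1]})$. By the moment-map identity the cokernel of $d\mu$ at $y\in Y$ is dual to $\mathrm{Lie}(\mathrm{Stab}_{y})=0$, so $Y$ is smooth of pure dimension $2\dim X-\dim G$; as $G$ is reductive the free orbits in $Y$ are closed, so $\pi\colon Y\to M:=Y//G$ is a principal $G$-bundle, \'etale-locally trivial. Being Azumaya being \'etale-local on the base, it suffices to argue after an \'etale base change trivializing $\pi$. On the commutative side, flatness of $\mu$ makes the symbols $\sigma(\xi_g)$, $g\in\mf{g}$, a regular sequence in $\bggo(T^*(X))$, so the Koszul complex $D(X)\otimes\Lambda^{\bullet}\mf{g}\to\bar D$ is a resolution, $\Gr\bar D=\bggo(\mu^{-1}(0))$, and $\bar D$ is Cohen--Macaulay of dimension $\dim Y$. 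Transporting $\bar D$ through the Azumaya structure of $D(X)$ --- locally a matrix algebra over $\bggo(T^*(X))^{(1)}$ --- yields a coherent $\bggo(Y^{(1)})$-module $\mathcal N$ that is Cohen--Macaulay of full dimension on the smooth variety $Y^{(1)}$, hence a vector bundle there; the residual $G$-action makes $\mathcal N$ a $G$-equivariant bundle, which descends along $\pi$ to a vector bundle on $M^{(1)}$. One then identifies $U_{\chi}(G,X)$ with the induced descent of $\End_{D(X)}(\bar D)$, an Azumaya algebra over $\bggo(M^{(1)})=\bggo(\mu^{-1}(\chi^{[1]})//G)^{(1)}$; since the Frobenius twist is immaterial here, $U_{\chi}(G,X)$ is Azumaya over $\mu^{-1}(\chi^{[1]})//G$.

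The main obstacle is the last step: matching the noncommutative reduction $(D(X)/D(X)\mf{g}^{\chi})^{G}$ with the commutative datum $\mathcal N$ on $Y^{(1)}$ is delicate because the Azumaya splitting $D(X)\cong\End_{\bggo}(E)$ is only local and need not be $G$-equivariant, so the $G$-action does not obviously transport to $\mathcal N$. I would handle this by working in the \'etale charts where it does, using that the remaining ambiguity is an inner automorphism and that descent of equivariant coherent sheaves along a $G$-torsor is unobstructed; equivalently, one may carry out the discussion over the quotient stack $[Y^{(1)}/G]\cong M^{(1)}$. Flatness of $\mu$ enters essentially --- to control the support and Cohen--Macaulay type of $\bar D$ and to obtain $\Gr U_{\chi}(G,X)=\bggo(\mu^{-1}(0)//G)$ --- and without it $\mathcal N$ may fail to be locally free, so the reduction need not be Azumaya.
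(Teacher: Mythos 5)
The paper offers no proof of this lemma---it is quoted verbatim from [BFG]---so the relevant comparison is with the argument there, and your proposal is essentially a reconstruction of it: finiteness via the $p$-center and the central character $\chi^{[1]}$, then the Azumaya property by untwisting $\bar{D}=D(X)/D(X)\mf{g}^{\chi}$ through the Azumaya algebra $D(X)$, local freeness from the complete-intersection/Cohen--Macaulay property supplied by flatness of $\mu$, and descent along the torsor $Y=\mu^{-1}(\chi^{[1]})\to Y//G$. Two of your justifications are off, though the conclusions stand. First, $D(X)$ is certainly not finite over $\bggo(T^*(X)//G)^{(1)}$ (it is finite over the much larger ring $\bggo(T^*(X))^{(1)}$); what you actually need is that $D(X)^G$ is finite over $(\bggo(T^*(X))^{G})^{(1)}$, which follows either from the standard fact that for reductive $G$ the invariants of a finite equivariant module over a finitely generated $G$-algebra are finite over the invariant subalgebra, or by passing to associated graded and using that $\bggo(T^*(X))^G$ is finite over its subring of $p$-th powers. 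Second, free orbits of a reductive group need not be closed (take $\mathbb{G}_m$ acting on $\mathbb{A}^2$ by $t\cdot(x,y)=(tx,t^{-1}y)$ and the point $(1,0)$); the correct reason orbits in $Y$ are closed is that \emph{all} orbits in $Y$ are free, hence all of dimension $\dim G$, so the boundary of an orbit closure, which consists of strictly smaller orbits, is empty. Your central-character computation showing that $g^p-g^{[p]}$ acts on $\bar{D}$ by $\chi^{[1]}(g)$ is correct and is exactly the standard one.

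The genuinely hard step---which you flag but do not carry out---is the identification of $U_{\chi}(G,X)=\bar{D}^{G}$ with the global sections of the descended Azumaya algebra. A priori $\bar{D}^{G}$ is the $G$-invariants of a module, not an endomorphism ring; one must first produce the natural map $U_{\chi}(G,X)\to\End_{D(X)}(\bar{D})^{\mathrm{op}}$ (right multiplication, well defined because $G$-invariant elements normalize the left ideal $D(X)\mf{g}^{\chi}$) and then prove it is an isomorphism onto the $G$-invariant endomorphisms, equivalently onto the sections over $M^{(1)}$ of the descent of the sheaf $\End_{D(X)}(\bar{D})$. This is where [BFG] does real work, and it is best done by treating $\bar{D}$ as a $G$-equivariant module over the $G$-equivariant Azumaya algebra $D(X)|_{Y^{(1)}}$, so that $\End_{D(X)}(\bar{D})$ is canonically defined and equivariant with no choice of splitting---this also disposes of your stated worry about non-equivariant splittings more cleanly than working in \'etale charts. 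As written, your argument shows that an Azumaya algebra over $\mu^{-1}(\chi^{[1]})//G$ appears in the picture, but not yet that $U_{\chi}(G,X)$ is that algebra; that identification must be supplied (or the lemma simply cited, as the paper does).
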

We need the following criterion of simplicity of certain filtered quantizations.

\begin{lemma}\label{criterion}
Let $S\subset\mathbb{C}$ be a finitely generated ring, and let $R$ be a filtered
$S$-algebra, such that $\Gr(R)$ is a finitely generated commutative ring over $S.$
Assume that for all  large enough primes $p$ the algebra $R_p=R/pR$ is an Azumaya algebra over its center $Z_p$, moreover
$\spec(Z_p)$ is a smooth symplectic variety over $S_p$ under the reduction modulo $p$ Poisson
bracket. Let $F$ be the field of fractions of $S.$ Then $R_F=R\otimes_SF$ is a simple ring.
\end{lemma}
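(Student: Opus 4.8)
The plan is to argue by contradiction, using reduction modulo $p$ for all large $p$: from a hypothetical nonzero proper two-sided ideal of $R_F$ I will manufacture, for almost all $p$, a nonzero proper \emph{Poisson} ideal of $Z_p$, which cannot exist because $\spec(Z_p)$ is smooth and symplectic.

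\emph{Setup.} By generic flatness I may replace $S$ by a localization $S[s^{-1}]$, $s\neq 0$ — this changes neither $F$ nor the hypotheses for large $p$ — so that $\Gr R$, hence each filtered piece $R_{\le n}$ and $R$ itself, is flat over $S$; then $p$ is a nonzerodivisor in $S$ and in $R$, $R\hookrightarrow R_F$, and $R_p=R/pR=R\otimes_S S_p$. Suppose $I\subsetneq R_F$ is a nonzero two-sided ideal, and let $J\subseteq R$ be its preimage. Then $J$ is a two-sided ideal, $J\neq R$, and $J\otimes_S F=I$; in particular $R/J$ embeds in the $F$-vector space $R_F/I$, so it is $S$-torsion-free, hence $p$ acts injectively on it, i.e. $J\cap pR=pJ$. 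Therefore the image $\bar J$ of $J$ in $R_p$ is a two-sided ideal with $\bar J\cong J/pJ$ and $R_p/\bar J\cong (R/J)/p(R/J)$.

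\emph{$\bar J$ is nonzero and proper for almost all $p$.} Choose $n$ with $J_{\le n}:=J\cap R_{\le n}$ satisfying $J_{\le n}\otimes_S F\neq 0$; this is a finitely generated $S$-module, and $S$-flat after a further localization. For all but finitely many $p$ one has $J_{\le n}\neq pJ_{\le n}$ (otherwise Nakayama at a prime of $S$ containing $p$ would give $J_{\le n}\otimes_S F=0$), and $S$-flatness gives $J_{\le n}\cap pJ=pJ_{\le n}$, so $0\neq J_{\le n}/pJ_{\le n}\hookrightarrow J/pJ=\bar J$. Likewise $\Gr(R/J)=\Gr R/\Gr J$ is a nonzero finitely generated commutative $S$-algebra, $S$-flat after localizing, and a nonzero finitely generated flat algebra over the characteristic-zero domain $S$ has nonzero reduction at almost all primes; hence $\Gr(R/J)\otimes_S\mathbb F_p\neq 0$, so $(R/J)/p(R/J)\neq 0$, i.e. $\bar J\neq R_p$. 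Thus $0\neq\bar J\subsetneq R_p$ for all large $p$.

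\emph{Producing a Poisson ideal and concluding.} Fix a large $p$ with $R_p$ Azumaya over $Z_p$ and $\spec(Z_p)$ smooth symplectic. Two-sided ideals of an Azumaya algebra are extended from the center, so $\mathfrak a:=\bar J\cap Z_p$ is a nonzero proper ideal of $Z_p$ with $\bar J=\mathfrak a R_p$. It is a Poisson ideal: for $a\in\mathfrak a$ and $z\in Z_p$, lift $a$ to some $\tilde a\in J$ (possible since $\mathfrak a\subseteq\bar J=$ image of $J$) and $z$ to any $\tilde z\in R$; then $[\tilde a,\tilde z]\in pR$ because $a,z$ are central mod $p$, and $\{a,z\}=\frac{1}{p}[\tilde a,\tilde z]\bmod p$ (a central element, the bracket of central elements being central). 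But $\tilde a\in J$ forces $[\tilde a,\tilde z]\in J$, so $\frac{1}{p}[\tilde a,\tilde z]\in J$ by $p$-saturation of $J$, whence $\{a,z\}$ lies in the image of $J$, i.e. in $\bar J\cap Z_p=\mathfrak a$. Finally, a smooth symplectic variety has no nonzero proper Poisson ideal: a Poisson ideal cuts out a union of symplectic leaves, and on a smooth symplectic variety the leaves are the connected components. This contradicts $0\neq\mathfrak a\subsetneq Z_p$, so $R_F$ is simple.

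\emph{Main obstacle.} The two conceptual inputs — the Poisson-bracket identity for $\mathfrak a$ and the rigidity of symplectic varieties — are short; the real work is the bookkeeping of the middle step, ensuring the reduction of the characteristic-zero ideal $J$ stays simultaneously nonzero and proper for almost all $p$, which forces the repeated passage to localizations of $S$ and the joint use of generic flatness and the order filtration. (If "variety" is not taken to mean connected, the argument instead forces $\spec Z_p$ to be disconnected for infinitely many $p$; one then restricts to a connected component of the generic fibre over $S$ and repeats, or simply assumes connectedness.)
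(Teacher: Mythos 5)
Your argument correctly reduces the problem, for almost all $p$, to the existence of a nonzero proper Poisson ideal $\mathfrak a\subsetneq Z_p$, and the Azumaya and $p$-saturation bookkeeping in the first three steps is fine. The gap is in the final step: the assertion that a smooth affine symplectic variety has no nonzero proper Poisson ideal is \emph{false} in characteristic $p$. For instance, take $Z=\mathbf k[x,y]$ with $\{x,y\}=1$; then $\{x^p,f\}=p\,x^{p-1}\{x,f\}=0$ for all $f$, so $(x^p)$ is a nonzero proper Poisson ideal of a smooth symplectic affine plane. In general, Hamiltonian vector fields on a smooth symplectic variety do span the tangent sheaf, but in characteristic $p$ ideals of the form $(g^p)$ are killed by every derivation, so the ``Poisson ideals cut out unions of leaves, and leaves are components'' argument (which is the characteristic-zero argument) does not go through. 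Concretely, this is why the Weyl algebra over $\mathbf k$ is far from simple even though it is Azumaya over its smooth symplectic $p$-center: its many two-sided ideals are exactly the extensions of such Poisson ideals, so your construction does not produce a contradiction.

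This is precisely what the paper's Lemma~\ref{simple} (Corollary~8 of~[T]) is designed to fix. The paper does not reduce modulo $p$ but instead passes to the $p$-adic completion $\bar R_p$ over $W(\mathbf k)$: the resulting ideal $\bar I_p$ of $\bar R_p$ is nontrivial and topologically free, and the cited result says that $\bar R_p[p^{-1}]$ is topologically simple whenever $\bar R_p/p$ is Azumaya over a smooth symplectic center. Working over $\mathbb Z/p^n$ for all $n$ (equivalently over $W(\mathbf k)[p^{-1}]$) rules out the Frobenius-type Poisson ideals above: such an ideal in $Z_p$ need not come from, and indeed cannot come from, a closed two-sided ideal of $\bar R_p[p^{-1}]$. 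So the paper's proof is genuinely using a stronger input than ``no Poisson ideals mod $p$.'' To repair your proof along its own lines you would need to exploit that your $\mathfrak a$ is the mod-$p$ shadow of the single characteristic-zero ideal $J$ and hence lifts compatibly modulo every power of $p$; making that rigidity precise is essentially re-proving Lemma~\ref{simple}, i.e.\ invoking the $p$-adic formalism you were trying to avoid.
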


\begin{proof}
Let $I$ be a nonzero two sided ideal of $R$ such that $(R/I)_F\neq 0.$ 
After localizing $S$ further, we may assume  using the generic flatness theorem 
that $\Gr(R/I)$ and $R/I, R$ are free $S$-modules. Hence for $p\gg 0,$
$\bar{I}_p$ (the $p$-adic completion of $I$) is a topologically free nontrivial
two-sided ideal of $\bar{R}_p$ (the $p$-adic completion of $R$). Now Lemma \ref{simple} yields a contradiction.

\end{proof}
Next we state a result implying that taking quantum Hamiltonian reduction and reducing modulo a large prime commute.
The statement and its proof were kindly provided by W. van der Kallen ( via mathoverflow.org.)
Possible mistakes in the proof below are solely due to the author.

\begin{theorem}[van der Kallen]\label{change}
Let $S$ be a commutative Noetherian ring of finite homological dimension, let $R$ be a commutative $S$-algebra
flat over $S.$ Let $G$ be a split  reductive group over $S$ acting on $R.$ 
Then for all $p\gg 0$ and a base change to a characteristic $p$ field $S\to\bold{k},$ the map
$R^G\otimes_S\bold{k}\to R_{\bold{k}}^{G_{\bold{k}}}$ is  surjective.
\end{theorem}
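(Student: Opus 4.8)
The plan is to reduce the statement to a cohomological vanishing and then play characteristic zero against characteristic $p$. First I would reduce to the case where $R$ is replaced by a single $G$-stable $S$-submodule $M$ that is finitely generated over $S$, flat over $S$, and with $R/M$ flat over $S$ as well (using generic flatness after shrinking $\spec S$): since $R$ is the filtered union of its $G$-stable finitely generated $S$-submodules and both $(-)^{G}$ and $(-)\otimes_S\mathbf{k}$ commute with filtered colimits, any element of $R_{\mathbf{k}}^{G_{\mathbf{k}}}$ comes from $(M\otimes_S\mathbf{k})^{G_{\mathbf{k}}}$ for such an $M$, and $M\otimes_S\mathbf{k}\hookrightarrow R\otimes_S\mathbf{k}$, so it suffices to prove that $M^{G}\otimes_S\mathbf{k}\to (M\otimes_S\mathbf{k})^{G_{\mathbf{k}}}$ is surjective.

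Next I would compute rational $G$-cohomology by the Hochschild complex $C^{\bullet}(G,M)$, with $C^{n}(G,M)=\mathcal{O}(G)^{\otimes_S n}\otimes_S M$ and $H^{0}=M^{G}$. Since $G$ is split, $\mathcal{O}(G)$ is free over $S$, so $C^{\bullet}(G,M)$ is a bounded-below complex of flat $S$-modules and $C^{\bullet}(G,M)\otimes_S\mathbf{k}=C^{\bullet}(G_{\mathbf{k}},M_{\mathbf{k}})$, whence $H^{i}(G_{\mathbf{k}},M_{\mathbf{k}})=H^{i}(C^{\bullet}(G,M)\otimes_S\mathbf{k})$. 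As $S$ has finite homological dimension $d$, the hyper-Tor spectral sequence $E_{2}^{-j,i}=\operatorname{Tor}^{S}_{j}(H^{i}(G,M),\mathbf{k})\Rightarrow H^{i-j}(C^{\bullet}(G,M)\otimes_S\mathbf{k})$ converges, and tracing its edge maps in total degree $0$ identifies the base change map with the inclusion $E_{\infty}^{0,0}\hookrightarrow H^{0}$, whose cokernel is assembled from subquotients of $\operatorname{Tor}^{S}_{j}(H^{j}(G,M),\mathbf{k})$ with $1\le j\le d$. Hence it is enough to arrange $H^{j}(G,M)=0$ for $1\le j\le d$ after a further localization of $S$ (and then the same spectral sequence shows the base change map is even bijective).

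For this last point, note that over the fraction field $F$ of $S$, which has characteristic zero, $G_{F}$ is linearly reductive, so $H^{j}(G_{F},M_{F})=0$ for $j>0$; since cohomology of $C^{\bullet}(G,M)$ commutes with the flat base change $S\to F$, each $H^{j}(G,M)$ with $j>0$ is $S$-torsion, and — being finitely generated over the Noetherian ring $S$ by the standard finiteness of rational cohomology — is annihilated by a nonzero element of $S$. Inverting these finitely many elements makes $H^{1}(G,M),\dots,H^{d}(G,M)$ vanish, the relevant $\operatorname{Tor}$ groups disappear, and the base change map becomes an isomorphism. The step I expect to be the real obstacle is the uniformity in $p$: one must verify that the closed subsets of $\spec S$ supporting the $H^{>0}(G,M)$, together with the various generic-flatness localizations above, can genuinely be absorbed into the condition ``$p\gg 0$'' — equivalently, that the obstruction to base change survives only in finitely many residue characteristics. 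The cohomological input itself (the Hochschild complex, flatness, the Tor spectral sequence, and characteristic-zero linear reductivity) is routine.
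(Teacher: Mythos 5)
Your plan — reduce to a cohomological vanishing and exploit characteristic zero — is the right instinct, and the Hochschild complex / hyper-Tor bookkeeping is fine as far as it goes, but the gap you flag at the end is real and is exactly where the substance of the theorem lies. Over the fraction field $F$ of $S$, linear reductivity of $G_F$ does give $H^{j}(G,M)\otimes_S F=0$ for $j>0$, so each $H^{j}(G,M)$ is $S$-torsion and (granting finite generation) is killed by some nonzero $s_j\in S$. But inverting $s_1\cdots s_d$ only removes a Zariski-closed subset of $\spec S$ that generically need not avoid all large residue characteristics: if, say, $S=\mathbb{Z}[t]$ and $s_j=t$, then for every prime $p$ there is a base change $S\to\mathbf{k}$ of characteristic $p$ with $t\mapsto 0$, and your localization tells you nothing about it. The conclusion you want — that the obstruction lives in only finitely many residue characteristics — does not follow from generic vanishing; it has to be put in.

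What supplies it in the paper is the power reductivity theorem of Franjou and van der Kallen (\cite{FW}, Theorem~33): there is a single integer $n\geq 1$ such that $H^{i}(G,\,\cdot\,)$ vanishes in positive degrees after inverting $n$. Inverting an integer is precisely a ``$p\gg 0$'' condition, which your inversion of an arbitrary nonzero $s\in S$ is not. Given that input, the paper's route is shorter than your spectral-sequence argument: with $D$ the image and $N$ the kernel of $S[\tfrac1n]\to\mathbf{k}$, the vanishing of $H^{1}$ on the term coming from $N$ in the long exact sequence for $0\to R\otimes_S N\to R[\tfrac1n]\to R\otimes_S D\to 0$ gives surjectivity of $(R[\tfrac1n])^{G}\to(R\otimes_S D)^{G}$, and flatness of $\mathbf{k}$ over the domain $D$ then identifies $(R\otimes_S D)^{G}\otimes_D\mathbf{k}$ with $R_{\mathbf{k}}^{G_{\mathbf{k}}}$. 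So your framework is compatible with the needed input, but it does not produce it; the missing idea is a citation, not a computation. (A secondary caveat: the finite generation of $H^{j}(G,M)$ over $S$ that you invoke to find an annihilating element is itself a nontrivial theorem over a general Noetherian base, not an automatic consequence of the Hochschild complex.)
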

\begin{proof}
At first, recall that there exists an integer $n\geq1$ so that $H^i(G, S[\frac{1}{n}])=0$ for all $i$
[\cite{FW}, Theorem 33].
This implies $H^i(G, S[\frac{1}{n}]\otimes_SN)=0$ for any  $S$-module $N$ with the trivial $G$-action (since $S$ has a finite global dimension).
Let $D, N$ be respectively the image  and kernel of  the map $S[\frac{1}{n}]\to \bold{k}.$ As $H^1(G, S[\frac{1}{n}]\otimes_SN)=0,$
we get that $(R\otimes_SS[\frac{1}{n}])^G\to (R\otimes_SD)^G$ is surjective. Now flatness of $\bold{k}$  over $D$ yields that
$$(R\otimes_SD)^G\otimes_D\bold{k}=R_{\bold{k}}^{G_{\bold{k}}}.$$
Therefore, we obtain the desired surjectivity $R^G\otimes_S\bold{k}\to R_{\bold{k}}^{G_{\bold{k}}}$
\end{proof}

\begin{proof}[Proof of Theorem \ref{main}]

Recall that  some $0\neq f\in \mathcal{O}((\mathfrak{g}^*)^G)$ has the property that 
for any $\chi\in (\mf{g}/[\mf{g}, \mf{g}])^*$ such that $f(\chi)\neq 0,$
the action of $G$ on $\mu^{-1}(\chi)$ is free.
Let $S\subset\mathbb{C}$ be a large enough finitely generated subring over which $X, f$ and the action of $G$ on $X$ are defined.
Let $U \subset (\mf{g}/[\mf{g}, \mf{g}])_S^*$ denote the complement of the zero locus of $f.$ Thus, $G_S$ acts freely on $\mu^{-1}(U).$ 
Localizing $S$ further and using the generic flatness theorem, we may assume that 
$\mathcal{O}(\mu^{-1}(0)//G)$ and $\mathcal{O}(\mu^{-1}(0))/\mathcal{O}(\mu^{-1}(0)//G)$ is a flat $S$-module.

Let $e_1,\cdots, e_l$ be a basis of $\mf{g}_{\mathbb{Z}}/[\mf{g}_{\mathbb{Z}}, \mf{g}_{\mathbb{Z}}]$ over $\mathbb{Z}.$
Let $S\to\bold{k}$ be a base change to a characteristic $p$ field $\bold{k}$, let $\bar{\chi}$ denote the image of $\chi$
in $\mathfrak{g}_{\bold{k}}^*$. Then $$\bar{\chi}^{[1]}(\bar{e_i})=(\bar{\chi}(\bar{e}_i))^p-\bar{\chi}(\bar{e}_i).$$

Let $W\subset (\mf{g}/[\mf{g}, \mf{g}])^*$ be the set of all 
$ \chi$ so that $\chi(e_i)$ are algebraically independent over $S.$ Clearly $W$ is a very generic subset.
We will show that for any $\chi\in W$ algebra $U_{\chi}(G, X)$ is simple.

Put $R=U_{\chi}(G, X).$ We verify that $R$ satisfies assumptions in Lemma \ref{criterion}.
Indeed, let $S\to \bf{k}$ be a base change  to an algebraically
closed field $\bf{k}$ of characteristic $\gg 0,$ let $\bar{\chi}$ denote the base change of $\chi.$ 
Recall that $R$ is equipped with the filtration so that $\Gr(R)=\mathcal{O}(\mu^{-1}(0)//G).$
In particular, $R$ is a free $S$-module.
Similarly, $U_{\bar{\chi}}(G_{\bf{k}}, X_{\bf{k}})$ is equipped with the filtration such that 
$\Gr(U_{\bar{\chi}}(G_{\bf{k}}, X_{\bf{k}}))$ is a subring of $\mathcal{O}(\mu_{\bold{k}}^{-1}(0)//G_{\bold{k}}).$
Now applying Theorem \ref{change} to the action of $G$ on $\mathcal{O}(\mu^{-1}(0)),$ we conclude that 
$\mathcal{O}(\mu^{-1}(0)//G)\otimes_S\bf{k}$ surjects onto $\mathcal{O}(\mu_{\bold{k}}^{-1}(0)//G_{\bold{k}}).$
 On the other hand, since $\mathcal{O}(\mu^{-1}(0))/\mathcal{O}(\mu^{-1}(0)//G)$ is flat over $S$, we get that 
$$\mathcal{O}(\mu^{-1}(0)//G)\otimes_S\bold{k}\to \mathcal{O}(\mu^{-1}(0))\otimes_S\bf{k}$$ is injective.
 So, the restriction map $$\mathcal{O}(\mu^{-1}(0)//G)\otimes_S\bold{k}\to \mathcal{O}(\mu_{\bold{k}}^{-1}(0)//G_{\bold{k}})$$
is an isomorphism.
 Therefore, $\Gr(R)\otimes_S{\bold{k}}\cong \Gr(U_{\bar{\chi}}(G_{\bold{k}}, X_{\bold{k}})).$
 Now flatnest of $\Gr(R)$ over $S$ implies that $\Gr(R_{\bold{k}})=\Gr(R)\otimes_S{\bold{k}}.$
 Hence we conclude that $R_{\bold{k}}\cong U_{\bar{\chi}}(G_{\bold{k}}, X_{\bf{k}}).$
 

Since $\chi(e_1),\cdots, \chi(e_n)$ are algebraically independent over $S$, 
we get \\that $\bar{f}(\chi^{[1]})\neq 0$ for all $p\gg 0$ and an
appropriate base change $S\to\bf{k}.$ Hence $\bar{\chi}\in U_{\bf{k}}.$ 
As $G_S$ acts freely on $\mu^{-1}(U),$ 
we conclude that $G_{\bf{k}}$ acts freely on $\mu^{-1}(\bar{\chi}).$ So Lemma \ref{Azumaya} implies that $U_{\bar{\chi}}(G_{\bold{k}}, X_{\bf{k}})$
is an Azumaya algebra over a symplectic variety under the reduction modulo $p$
Poisson bracket. So, conditions of Lemma \ref{criterion} are met. Hence we have shown that algebra $U_{\chi}(G, X)$ is simple
for very generic values of $\chi.$

Now suppose there exists a nonzero $f\in \mathfrak{g}_{\mathbb{Z}}/[\mathfrak{g}_{\mathbb{Z}}, \mathfrak{g}_{\mathbb{Z}}]$  such that $G$ acts freely on $\mu^{-1}(\chi)$
when $\chi(f)\neq 0.$ Let  $S$  be a finitely generated subring containing $\chi(e_i)$ satisfying conditions as above.
Write $f=\sum_if_ie_i, f_i\in \mathbb{Z}.$ Then given a base change $S\to\bold{k}$, 
we have $$\bar{\chi}^{[1]}(\bar{f})=\sum_i\bar{f}_i((\bar{\chi}(\bar{e}_i))^p-\bar{\chi}(\bar{e}_i))=\bar{\chi}(\bar{f})^p-\bar{\chi}(\bar{f}).$$
Let $\chi$ be so that $\chi(f)$ is irrational.
Then it follows from the Chebotarev density theorem that there are
arbitrarily large primes $p$ and a base change $S\to \bf{k}$ to an algebraically closed field
$\bold{k}$ of characteristic $p$, such that $\bar{\chi}(\bar{f})\notin \mathbb{F}_p$. Hence   $\chi^{[1]}(\bar{f})$ is nonzero in $\bold{k}.$
So, $G_{\bf{k}}$ acts freely on $\mu^{-1}(\bar{\chi}),$ and arguing
just as above we may conclude that the algebra $U_{\chi}(G, X)$ is simple.


\end{proof}

We may apply the above result to certain filtered quantizations of quiver varieties as follows. 
Let $Q$ be a quiver with $n$ vertices,
let $\alpha$ be a its positive root. Then $G=\prod GL_{\alpha_i}/\mathbb{C}^*$ acts
on the space of $\alpha$-dimensional representations $Rep(Q, {\alpha})$
giving rise to the moment map $m_{\alpha}:T^*(Rep(Q, {\alpha}))\to \mathfrak{g}^*.$
We will identify $(\mathfrak{g}^*)^G$ with $\lambda\in \mathbb{C}^n$ such that $\lambda\cdot \alpha=0.$
From now on we assume that the moment map $m_{\alpha}$ is flat. The set of such
dimension vectors $\alpha$ was fully described by Crawly-Boevey in [\cite{CB} Theorem 1.1]
Denote by $A_{\lambda}(Q, \alpha)$ the corresponding quantum Hamiltonian reduction
of the ring of differential operators $D(Rep(Q, {\alpha}))$ with respect to the character $\lambda.$

We have the following direct corollary  of Theorem \ref{main}. Remark that stronger results on generic simplicity 
follows from  the works of Losev on
quantizations of quiver varieties (see for example  [\cite{L}, Theorem 1.4.2].)
\begin{theorem}
Let $\alpha$ be a positive root as above.
Let $\lambda\cdot \alpha=0$ be such that $\lambda\cdot\beta\notin \mathbb{Q}$
for any positive root $\beta<\alpha.$ Then $A_{\lambda}(Q, \alpha)$ is simple.

\end{theorem}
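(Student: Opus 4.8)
The plan is to place ourselves in the framework of Theorem \ref{main}. Put $X=Rep(Q,\alpha)$, let $G=\prod_i GL_{\alpha_i}/\mathbb{C}^*$ act on $X$, and let $m_\alpha\colon T^*X\to\mathfrak{g}^*$ be the moment map; its flatness is our standing hypothesis on $\alpha$, and under the identification of $(\mathfrak{g}^*)^G$ with $\{\lambda\in\mathbb{C}^n:\lambda\cdot\alpha=0\}$ we have $A_\lambda(Q,\alpha)=U_\lambda(G,X)$. The integral lattice $\mathfrak{g}_{\mathbb{Z}}/[\mathfrak{g}_{\mathbb{Z}},\mathfrak{g}_{\mathbb{Z}}]$ is identified with the lattice of integral dimension vectors modulo $\mathbb{Z}\alpha$, the pairing with $\lambda$ sending the class of $\gamma$ to $\lambda\cdot\gamma$; in particular each integral vector $\gamma$ yields an element $f_\gamma\in\mathfrak{g}_{\mathbb{Z}}/[\mathfrak{g}_{\mathbb{Z}},\mathfrak{g}_{\mathbb{Z}}]$ with $\lambda(f_\gamma)=\lambda\cdot\gamma$.

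First I would pin down the locus where $G$ acts freely. A point of $m_\alpha^{-1}(\lambda)$ is a module $M$ of dimension vector $\alpha$ over the deformed preprojective algebra $\Pi^\lambda(Q)$, and its $G$-stabilizer is $\mathrm{Aut}_{\Pi^\lambda}(M)/\mathbb{C}^*$, which is trivial precisely when $\mathrm{End}_{\Pi^\lambda}(M)=\mathbb{C}$, i.e. $M$ is a brick. By Crawley-Boevey's results on $\Pi^\lambda(Q)$ the dimension vector of any simple $\Pi^\lambda$-module is a positive root $\gamma$ with $\lambda\cdot\gamma=0$; so if $\lambda\cdot\beta\neq0$ for every positive root $0<\beta<\alpha$, then in a composition series of $M$ the only admissible factor has dimension $\alpha$, whence $M$ is simple, in particular a brick. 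Thus $G$ acts freely on $m_\alpha^{-1}(\lambda)$ as soon as $\lambda$ lies off the finite union of hyperplanes $H_\beta=\{\lambda:\lambda\cdot\beta=0\}$, $\beta$ a positive root $<\alpha$, and the same description holds after base change to an algebraically closed field of large characteristic.

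With this in hand I would run the reduction modulo $p$ argument of Theorem \ref{main}. Choose a finitely generated $S\subset\mathbb{C}$ containing all the numbers $\lambda\cdot\beta$ for $\beta$ in the finite set $B$ of positive roots $0<\beta<\alpha$, and large enough for $X$, the $G$-action and the relevant sheaves to be defined over $S$ and to satisfy the flatness statements used in that proof. For a base change $S\to\mathbf{k}$ to an algebraically closed field of characteristic $p$, the computation in the proof of Theorem \ref{main} gives, for each $\beta\in B$ regarded as an integral vector, $\bar\lambda^{[1]}\cdot\beta=(\overline{\lambda\cdot\beta})^p-\overline{\lambda\cdot\beta}$. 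Hence $\bar\lambda^{[1]}$ avoids all the $H_\beta$, equivalently $G_{\mathbf{k}}$ acts freely on $m_\alpha^{-1}(\bar\lambda^{[1]})$, precisely when $\overline{\lambda\cdot\beta}\notin\mathbb{F}_p$ for every $\beta\in B$ simultaneously. For such $p$, Lemma \ref{Azumaya} makes $A_{\bar\lambda}(Q_{\mathbf{k}},\alpha)$ an Azumaya algebra over the smooth symplectic variety $m_\alpha^{-1}(\bar\lambda^{[1]})//G_{\mathbf{k}}$ (for the reduction modulo $p$ Poisson bracket), and combining this with Theorem \ref{change} — to identify $A_\lambda(Q,\alpha)\otimes_S\mathbf{k}$ with $A_{\bar\lambda}(Q_{\mathbf{k}},\alpha)$, exactly as in the proof of Theorem \ref{main} — and with Lemma \ref{criterion} forces $A_\lambda(Q,\alpha)$ to be simple, provided there are infinitely many such primes $p$.

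The main obstacle is exactly this last point: one must produce infinitely many primes $p$ (with, for each, a maximal ideal of $S$ above it) for which all of the finitely many quantities $\lambda\cdot\beta$, $\beta\in B$, reduce outside the prime field $\mathbb{F}_p$ at once. For a single $\beta$ this is the Chebotarev step (or the algebraic-independence step) in the proof of Theorem \ref{main}; the new feature here is the simultaneity over the whole family $B$, and it is where the full hypothesis $\lambda\cdot\beta\notin\mathbb{Q}$, rather than merely $\lambda\cdot\beta\neq0$, must be exploited. I would split $B$ into the roots whose values are algebraically independent over a suitably chosen $S$ — for which the non-membership is automatic once $p$ is large — and the remaining roots, whose values generate a number field $L$, and then look for primes whose Frobenius in $\mathrm{Gal}(L/\mathbb{Q})$ fixes none of these values; since arranging this simultaneously is genuinely more delicate than a single application of Chebotarev, the real work is to organise these finitely many congruence conditions — if need be by first passing from $\lambda$ to a very generic perturbation and using a continuity or deformation argument — so that infinitely many good primes remain. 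Everything else is the ring-theoretic scaffolding already assembled in the preceding lemmas.
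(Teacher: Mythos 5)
Your proposal is structurally faithful to what the paper does: you place the quiver theorem inside the framework of Theorem~\ref{main}, pass through Crawley-Boevey's description of simple $\Pi^\lambda$-modules to identify the free locus with the complement of the hyperplanes $H_\beta$, run the mod $p$ machinery (Theorem~\ref{change}, Lemma~\ref{Azumaya}, Lemma~\ref{criterion}), and then confront the Chebotarev step. The paper itself gives no proof beyond the sentence ``direct corollary of Theorem~\ref{main},'' presumably having in mind the ``moreover'' clause of that theorem with $f$ running over the positive roots $\beta<\alpha$.

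But you have put your finger on the genuine subtlety, and you are right that it is not dealt with. The ``moreover'' clause of Theorem~\ref{main} is formulated for a \emph{single} $f\in\mathfrak{g}_{\mathbb{Z}}/[\mathfrak{g}_{\mathbb{Z}},\mathfrak{g}_{\mathbb{Z}}]$ with the property that $\chi(f)\neq 0$ alone forces freeness, whereas for quivers freeness on $m_\alpha^{-1}(\lambda)$ requires $\lambda\cdot\beta\neq 0$ for \emph{every} positive root $\beta<\alpha$ simultaneously, and no single linear $f$ captures a finite union of hyperplanes. So the quiver theorem is not literally an instance of the stated ``moreover'' clause; one must rerun the argument and produce, for infinitely many $p$, a specialization $S\to\mathbf{k}$ under which \emph{all} the reductions $\overline{\lambda\cdot\beta}$ simultaneously lie outside $\mathbb{F}_p$. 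This is exactly where you stop, and rightly so: a naive union bound does not work (for a single specialization $S\to\overline{\mathbb{F}_p}$, irrational elements such as $\sqrt 2,\sqrt 3,\sqrt 6$ of a fixed number ring can never all reduce outside $\mathbb{F}_p$ at the same prime). The relevant saving grace for quivers is that the $\lambda\cdot\beta$ all lie in the $\mathbb{Q}$-span of $\lambda_1,\dots,\lambda_n$ subject to $\lambda\cdot\alpha=0$, so they are additively, not multiplicatively, constrained; but turning that into a clean statement (e.g.\ exhibiting a Frobenius that moves all of them at once) still needs to be done, and your proposed escape via a ``very generic perturbation'' of $\lambda$ does not obviously help, since $\lambda$ is a fixed parameter and simplicity of $A_\lambda$ is not a deformation-invariant property. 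In short: you have correctly flagged the one step at which the ``direct corollary'' claim requires more than Theorem~\ref{main} provides, and your sketch does not yet close that gap.
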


\end{document}